\theoremstyle{plain}
\newtheorem{theorem}{Theorem}[section]
\newtheorem{lemma}[theorem]{Lemma}
\newtheorem{corollary}[theorem]{Corollary}
\theoremstyle{definition}
\theoremstyle{remark}
\newtheorem{remark}{Remark}
\newcommand{\arctanh}{\rm arctanh}
\begin{document}

\title[Rigidity for convex billiards]
      {Hopf rigidity for convex billiards on the Hemisphere and Hyperbolic Plane}

\date{9 May 2012}
\author{Misha Bialy}
\address{School of Mathematical Sciences, Raymond and Beverly Sackler Faculty of Exact Sciences, Tel Aviv University,
Israel} \email{bialy@post.tau.ac.il}
\thanks{Partially supported by ISF grant 128/10}

\subjclass[2000]{ } \keywords{Conjugate points, Billiards,
Integrable systems}

\begin{abstract}
This paper deals with Hopf type rigidity for convex billiards on
surfaces of constant curvature. We prove that the only convex
billiard without conjugate points on the Hyperbolic plane or on the
Hemisphere is circular billiard.

\end{abstract}

\maketitle

\section{Introduction and the result}In this paper we consider
convex billiard with smooth (class $C^2$) boundary curve $\gamma$
which lies on a constant curvature $K=\pm1$ surface, denoted by $S$,
hemisphere or hyperbolic plane. We shall assume that geodesic
curvature $k$ of the boundary curve $\gamma$ everywhere positive.

Inspired by a famous theorem of E.Hopf \cite {H} (see \cite {BI} for
the generalization), I proved for the Euclidean plane in \cite{B1}
that the only billiard which has no conjugate points (as a discrete
dynamical system) is circular billiard. This is in fact very
geometric result because as corollary one gets that the only
\textit{totally} integrable billiard in the plane is circular
billiard  (total integrability, after \cite{knauf}, means that the
phase cylinder is foliated by closed invariant curves which are not
null homotopic). The proof of this result consists of two steps
(very much like in the Hopf theorem): the first is the construction
of non-vanishing Jacobi field along every orbit, and the second is
an argument of integral geometry.

It was observed by M. Wojtkowski in \cite{W} that this result can be
proved by a different integral geometric argument which uses the
"mirror" formula of geometric optics. We refer to \cite {T}, \cite
{G} for the discussion and open problems and to \cite{I} to other
developments. However it was an old question which people tried to
answer if this Hopf rigidity still holds for other models of
billiards, which also give rise to twist symplectic maps of the
cylinder. For all of these models the first part of the proof
constructing non-vanishing Jacobi fields can be also performed. But
integral geometric part is not known and not clear.

In this paper we prove the  rigidity result for convex billiards on
surfaces of constant, positive or negative curvature. The idea is
again integral geometric and uses the "mirror formula" in the case
of constant curvature. Using this tool we reduce the claim to a
geometric inequality which turns out to be a consequence of the
isoperimetric inequality on the surface. However, in the case of
constant non-zero curvature this reduction and the reduced geometric
inequality become much more involved, just because the isoperimetric
inequalities on surfaces are essentially more complicated.
Remarkably in the case of sphere the reduced geometric inequality
can be obtained from Fenchel inequality for space closed curves, but
I am not aware of such a proof for Hyperbolic case. For both cases
we give the intrinsic proofs in Sections 5,6 below.

Let me remind the definition of conjugate points for billiard
configurations (see \cite{B1}) or more generally for a twist map.
Throughout this paper we choose an arc-length parameter $x$ on of
$\gamma$ and identify $x$ with the $\gamma(x)$ everywhere below.
Recall that billiard configurations $\{x_n\}$ are extremals of the
action functional
\begin{equation}
A\{x_n\}=\sum_{-\infty}^{\infty} L(x_n,x_{n+1}),\label{a}
\end{equation}
 where
$L(x,y)=dist_S(x,y)$ is the generating function of the billiard ball
map:
$$T:(x, \Phi)\rightarrow (y,\Psi),\
\Phi=-L_1(x,y),\ \Psi=L_2(x,y).$$ Here $\ \Phi=\cos\phi,\
\Psi=\cos\psi\ $ and $\ \phi,\psi\ $ are the angles formed by the
oriented geodesic segment $g_{xy}$ joining $x$ and $y$ with the
oriented boundary curve $\gamma$. Subindexes of $L$ here and later
stand for partial derivatives. Then it follows that twist condition
(see below) is satisfied and $T$ is a symplectic twist map of the
cylinder $\Omega=\gamma\times(-1,1)$. A Jacobi field along
configuration ${x_n}$ is a sequence ${\xi_n}$ satisfying the
discrete Jacobi equation:
\begin{equation}
\label{Jacobi}
 b_{n-1}\xi_{n-1}+a_n
\xi_n+b_{n}\xi_{n+1}=0,
\end{equation}
$$a_n=L_{22}(x_{n-1},x_n)+L_{11}(x_n,x_{n+1}),\
b_n=L_{12}(x_n,x_{n+1}).$$

Two points $x_M,x_N$ of the configuration are called conjugate if
the exists a non-trivial Jacobi field vanishing at these points.

Our main result is as follows:
\begin {theorem}
Let $\gamma$ be a smooth convex simple closed curve having positive
geodesic curvature lying on the hemisphere or on the hyperbolic
plane of constant curvature $\pm1$. If every billiard configuration
$\{x_n\}$ has no conjugate points, then $\gamma$ is a geodesic
circle.
\end{theorem}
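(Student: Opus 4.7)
I would follow the two-step Hopf-style program the author indicates in the introduction. \emph{Step 1 (Jacobi fields).} Under the no-conjugate-points hypothesis, I construct along every orbit $\{x_n\}$ two positive ``focal distances'' $f_n^{\pm}$, defined as the $N\to\infty$ limits of the non-vanishing Jacobi fields that vanish at $x_{n\mp N}$. The hypothesis is precisely what forces these limits to exist and be strictly positive, and they descend to two positive functions $f^{\pm}$ on the phase cylinder $\Omega$ intertwined by $f^{+}=f^{-}\circ T$.

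\emph{Step 2 (mirror formula and averaging).} I then invoke the mirror formula of geometric optics on a surface of constant curvature $K=\pm 1$: at a reflection with angle $\phi$ and boundary geodesic curvature $k(x)$,
\begin{equation*}
   c_{K}(f^{-})+c_{K}(f^{+})\;=\;\frac{2k(x)}{\sin\phi},
\end{equation*}
with $c_{1}=\cot$ on the hemisphere and $c_{-1}=\coth$ on the hyperbolic plane. Integrating against the $T$-invariant measure $\sin\phi\,dx\,d\phi$ on $\Omega$ and using $f^{+}=f^{-}\circ T$ collapses the two terms on the left into a single integral and produces an identity relating $\int_{\Omega} c_{K}(f^{+})\sin\phi\,dx\,d\phi$ to $\pi\int_{\gamma}k\,ds$.

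\emph{Step 3 (reduction to isoperimetry).} The final step is to bound $\int_{\Omega}c_{K}(f^{+})\sin\phi\,dx\,d\phi$ from above by a quantity depending only on the length $L$ and the enclosed area $A$ of $\gamma$. Here one exploits that $f^{\pm}$ are dominated by the actual chord lengths realised by the billiard map together with the addition formulae for $\cot$ or $\coth$, so that the bound can be rewritten as a Crofton-type double integral over pairs of points of $\gamma$. Combined with Gauss-Bonnet, $\int_{\gamma}k\,ds=2\pi-KA$, this ought to yield precisely the sharp isoperimetric inequality on $S$ in terms of $L$ and $A$, achieved only on geodesic circles. Since the averaging procedure delivers equality in this inequality, $\gamma$ must be a geodesic circle.

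\emph{Where the difficulty lies.} The substantive work is in Step 3. In the Euclidean case $c_{0}(r)=1/r$, and the averaged identity collapses to $L^{2}\geq 4\pi A$ essentially by Cauchy-Schwarz, as in \cite{B1,W}. In nonzero curvature the transcendental functions $\cot$ and $\coth$ do not linearise in the same way, the isoperimetric profiles on the hemisphere and hyperbolic plane are genuinely different and more delicate, and the two signs of $K$ must be handled by separate arguments. On the sphere I would expect to reach the inequality through the Fenchel-type route for closed space curves that the author mentions; on the hyperbolic plane, where no such shortcut seems available, the main obstacle is to find an intrinsic integral-geometric proof of the hyperbolic isoperimetric form of the reduced inequality, and this is where I would focus the bulk of the effort.
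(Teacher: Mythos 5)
Your Steps 1 and 2 match the paper's framework: the no-conjugate-points hypothesis produces a positive Jacobi field, hence a measurable focal-distance function $a(x,\Phi)\in(0,L)$ on the phase cylinder, and the mirror formula $\frac{Y'}{Y}(a(x,\Phi))+\frac{Y'}{Y}\bigl(L(x_{-1},\Phi_{-1})-a(x_{-1},\Phi_{-1})\bigr)=2k(x)/\sin\phi$ holds with $Y'/Y=\coth$ or $\cot$. But your Step 3 --- which you yourself identify as the substantive work --- is left open, and the route you gesture at (bounding $\int c_K(f^+)\,d\mu$ by chord lengths and a Crofton-type double integral) is not the one that closes the argument; simply integrating the mirror equation gives the identity $\int[c_K(a)+c_K(L-a)]\,d\mu=2\pi\int_\gamma k\,ds$, which by itself does not connect to the isoperimetric inequality. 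The paper's decisive move is to apply Jensen's inequality \emph{pointwise} to the two terms of the mirror equation, which live at the two different phase points $(x,\Phi)$ and $T^{-1}(x,\Phi)$: convexity of $c_K$ gives $c_K\bigl(\tfrac12[a(x,\Phi)+L(x_{-1},\Phi_{-1})-a(x_{-1},\Phi_{-1})]\bigr)\le k(x)/\sin\phi$, and after inverting and integrating, the $a$-terms cancel by invariance of the measure while the $L$-term becomes $\pi A$ by Santal\'o's formula $\int L\,d\mu=2\pi A$ --- a tool your sketch never invokes. The inner $\phi$-integral is then evaluated in closed form, $\int_0^{\pi/2}\arctan(\sin\phi/k)\sin\phi\,d\phi=\tfrac{\pi}{2}(\sqrt{k^2+1}-k)$ (similarly with $\arctanh$ and $\sqrt{k^2-1}$), and Gauss--Bonnet reduces everything to the single curve inequality $\int_0^P\sqrt{k^2+K}\,dx\gtrless 2\pi$, whose opposite bound follows from Cauchy--Schwarz plus the isoperimetric inequality, with equality forcing $k$ constant.

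Two further points your plan misses. On the hemisphere the convexity step fails as stated, because $\cot$ is \emph{not} convex on $(0,\pi)$ and the focal distances can exceed $\pi/2$; the paper needs a separate midpoint lemma for $\cot$ valid under the constraint $\tfrac{a+b}{2}\le\pi/2$, which is itself extracted from positivity of the right-hand side of the mirror equation. On the hyperbolic plane one must first establish $k(x)>1$ (convexity with respect to horocycles), obtained by evaluating the mirror equation at $\phi=\pi/2$ and using $\coth t>1$; without this the quantity $\arctanh(\sin\phi/k)$ and the eventual $\sqrt{k^2-1}$ are not even defined. So while your overall architecture is the right one, the proposal as written is missing the specific mechanism (pointwise Jensen, Santal\'o, the explicit inner integral, and the reduced curve inequality) that turns the mirror equation into the isoperimetric rigidity.
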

\begin{corollary}
\label{cor} Suppose the billiard ball map $T$ is totally integrable,
meaning that through every point of the phase cylinder passes a
continuous non-contractible curve invariant under $T$. Then $\gamma$
is a geodesic circle.
\end{corollary}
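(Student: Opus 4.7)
The plan is to deduce Corollary \ref{cor} from Theorem 1.1 by showing that total integrability forces every billiard configuration to have no conjugate points; once that implication is in hand, the corollary is immediate.

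By a Birkhoff-type theorem for monotone twist maps of the cylinder, every continuous non-contractible $T$-invariant curve in $\Omega$ is automatically a Lipschitz graph $\Phi = h(x)$ over $\gamma$. The total integrability hypothesis therefore organises $\Omega$ into a $T$-invariant Lipschitz lamination by such graphs. I would exploit this lamination in a Hopf-style fashion to manufacture a non-vanishing discrete Jacobi field along every orbit. Concretely, given an orbit $\{x_n\}$ lying on a leaf $\Gamma$, sliding the initial point along $\Gamma$ produces a one-parameter family $\{x_n(t)\}$ of orbits, each of which is an extremal of the action (\ref{a}). Differentiating this family at $t=0$ yields a sequence $\xi_n$ which satisfies the discrete Jacobi equation (\ref{Jacobi}); and because $\Gamma$ projects bi-Lipschitz onto $\gamma$, this sequence is nowhere zero.

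A standard discrete Sturm-type comparison for the three-term recurrence (\ref{Jacobi}), available because the twist condition forces $b_n=L_{12}$ to have constant sign, then implies that any solution of (\ref{Jacobi}) linearly independent from $\xi$ admits at most one zero. Equivalently, the Wronskian $W_n = b_n(\xi_n\eta_{n+1}-\xi_{n+1}\eta_n)$ is $n$-independent and non-zero, which, combined with $\xi_n\neq 0$, rules out two zeros of $\eta$. In particular, no two points of the orbit $\{x_n\}$ can be conjugate in the sense recalled above. Theorem 1.1 therefore applies and forces $\gamma$ to be a geodesic circle.

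The principal technical point is the low regularity of the invariant graphs: $h$ is only Lipschitz, so the naive derivative ``slide along $\Gamma$'' need not exist classically at every orbit. This is handled in the usual way for Birkhoff--Hopf style arguments: Rademacher's theorem provides the tangent to $\Gamma$ on a full-measure subset of leaves, yielding the non-vanishing Jacobi field on a dense set of orbits; a density/compactness argument (or a formulation via Clarke's subgradient) then extends the conclusion to every orbit. This is the same kind of regularisation invoked in the classical Hopf and Birkhoff theorems referred to in the Introduction.
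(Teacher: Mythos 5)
Your proposal is correct in outline, but it reaches the no-conjugate-points hypothesis of Theorem 1.1 by a different route than the paper. The paper's own justification is the remark following Theorem \ref{equivalent}: by Aubry--Mather theory, any configuration coming from an orbit lying on a closed continuous non-contractible invariant curve is locally maximizing, i.e.\ condition (b) of Theorem \ref{equivalent} holds, and the equivalence (b) $\Leftrightarrow$ (a) (proved there by citing \cite{MMS}: the negative semi-definite second variation of a locally maximizing segment is automatically negative definite) yields no conjugate points, after which Theorem 1.1 applies. You instead enter the same equivalence theorem at conditions (c)/(d): Birkhoff's graph theorem makes each invariant curve a Lipschitz graph, Rademacher's theorem gives a tangent (non-vertical, monotone) direction almost everywhere, differentiating the orbit family along the curve produces a positive Jacobi field, and the discrete Sturm separation argument---which is precisely the Lemma inside the paper's proof of Theorem \ref{equivalent}---excludes conjugate points; the closure/limit argument you gesture at to pass from a full-measure set of orbits to all orbits is carried out essentially verbatim in the paper's proof of the implication (d) $\Rightarrow$ (a). So your argument is sound and all the machinery it needs is already established in Section 3, but it is the longer of the two paths: the paper's route avoids Birkhoff's theorem and the Rademacher regularisation entirely by quoting local maximality of orbits on invariant curves. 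One small imprecision to fix: ``nowhere zero'' is not by itself enough for the Sturm step, since a nonvanishing sequence can still change sign between consecutive indices (a generalized zero); what your construction actually delivers, and what the separation theorem requires, is a Jacobi field of constant sign.
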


Three remarks are in order:

1. The model of billiards on constant curvature surfaces (and even
more general models) is extensively studied. We refer to
\cite{Gutkin}, \cite{Zh} , \cite{GT},\cite{V} which are most
relevant to our result.

It is worth mentioning that similarly to the planar case the
elliptic domains on the hemisphere and hyperbolic plane are the only
known examples of integrable billiards (\cite{V}). Corollary \ref
{cor} proves a part of the Birkhoff conjecture on constant curvature
surfaces saying that the only integrable convex billiards are
ellipses. We refer to \cite {K} for other developments in the
direction of this conjecture.

2. In \cite{W} Wojtkowski uses the assumption of existence of the
so-called monotone invariant sub-bundle replacing the assumption of
no conjugate points. It is a consequence of \cite{B1} that these two
assumptions are in fact equivalent as we shall explain in Section 3.
However I prefer my original condition of no conjugate points.

3. It is a remarkable fact that for billiard in any convex domain of
Euclidean space of dimensions greater than 2 there always exist
conjugate points (see \cite {B2}). This is in fact a very general
phenomenon valid for Riemannian  billiards inside simple convex
domains, starting from dimension 3. It is still unclear to me how
billiard inside the round ball can be distinguished in terms of
variational properties of billiard orbits.

This paper is organized as follows. I shall discuss equivalent forms
of the "no conjugate points" condition for twist maps in Section 3
below. The proof of the main theorem uses "mirror" equation. We show
how one gets the "mirror" equation from our assumption of no
conjugate points in Section 4. The main  integral geometric part of
the proof is done separately: first for the hyperbolic plane in
Section 5 and then for the hemisphere in Section 6.

\section*{Acknowledgements} For many years we were playing mathematical billiards
with Sergei Tabachnikov  and it is a pleasure to thank him for many
discussions. He encouraged me to use the mirror formula instead of
my original approach. Thanks also to Yaron Ostrover and Daniel Rosen
who revived my interest to this subject again.
\section{Mirror formula}
The mirror formula of geometric optics for a convex mirror in the
Euclidean plane reads
$$ \frac {1}{a}+\frac {1}{b}=\frac{2k(x)}{\sin\phi}.$$
Here $x$ is a point on the mirror, $a$ is a distance from a point
$A$ inside the domain to the point $x$ along the shortest ray and
$b$ is a distance from $x$ along the reflected ray to the point $B$
where the focusing of the reflected beam occurs, $\phi$ is the angle
of reflection. The mirror formula can be generalized to simple
convex domains in Riemannian surfaces in a straightforward way.
(Being simple means that all geodesics inside the domain are
minimal, intersecting the boundary curve, in particular there are no
conjugate points along geodesics inside the domain). In Riemannian
case the mirror formula is the following:

$$\frac {Y_A^{'}(a)}{Y_A(a)}+\frac{Y_B^{'}(b)}{Y_B(b)}=\frac{2k(x)}{\sin\phi}.$$
Here the $Y$ denotes the orthogonal Jacobi field along geodesic
segments $g_{A,x}$ and $g_{B,x}$ going to $x$ from $A$ and $B$
respectively. In this paper we shall assume everywhere that subindex
$A$ at $Y$ means that $A$ is the initial point on geodesic segment
the initial conditions for Jacobi equation are.
$${Y_A(0)}=0; \
{Y_A^{'}(0)=1}.$$

It is important that for the case of constant curvature the Jacobi
field with these initial conditions do not depend on the point and
the direction and are equal to
$$Y(t)=\sinh t; \ Y(t)=\sin t,$$
for the case of hyperbolic plane and the hemisphere respectively. So
in this case the formula looks particularly simple (we refer to
\cite{Gutkin}, \cite{Zh} for some billiard applications of this
formula in constant curvature case, and to \cite{GT} for mirror
formula for other billiards):

$$\frac {Y^{'}}{Y}(a)+\frac{Y^{'}}{Y}(b)=\frac{2k(x)}{\sin\phi}.$$

 The proof of the mirror formula in general Riemannian case
follows immediately from the explicit computation of the second
derivatives of the length function $L$ (the calculations using Gauss
Bonnet theorem are easy
 and will not be reproduced here):

\begin{eqnarray}
\label{derivatives}
L_{11}(x,y)&=&\frac{Y_y^{'}}{Y_y}(L)\sin^2\phi-k(x)\sin\phi,\nonumber \\
L_{22}(x,y)&=&\frac{Y_x^{'}}{Y_x}(L)\sin^2\psi-k(y)\sin\psi,   \\
L_{12}(x,y)&=&\frac{\sin\phi\sin\psi}{Y_x(L)}=\frac{\sin\phi\sin\psi}{Y_y(L)}.\nonumber
\end{eqnarray}
Here $Y_x$ and $Y_y$ are Jacobi fields along the segments $g_{xy}$
and $g_{yx}$ with initial conditions at the points $x$ and $y$
respectively. Again in the case of constant curvature the subindexes
can be dropped.

\section{The condition of no conjugate points}
In this section $T$ denotes symplectic negative twist map of the
cylinder (not necessarily billiard map), with the generating
function $L$. Let me remind the condition used in \cite{W} instead
of no conjugate points condition.

Let $l$ be a \textit{non-vertical} sub-bundle of tangent lines to
the cylinder $\Omega$. In other words the line $l(x,\Phi)\subset
T_{(x,\Phi)} \Omega$ is assumed to be non-vertical for all
$(x,\Phi)\in\Omega$. Such a sub-bundle is called monotone in
\cite{W} if in addition it is equipped with the orientation chosen
on the lines by the condition $dx>0$.

In order to state all the equivalent conditions, I shall use also
the definition of locally maximizing configurations of negative
twist maps (note the positive sign of mixed derivative of $L$ in the
case of billiards). Infinite extremal $\{x_n\}$ is called locally
maximizing if any finite segment of the extremal is a local maximum
of the functional with the fixed end points.
\begin{theorem}
\label{equivalent}
 Given a twist area preserving map $T$ of the
cylinder $\Omega$ with the generating function $L$ (with the
positive cross derivative $L_{12}$, the so called negative twist).
The following conditions are equivalent:
\begin{enumerate}[(a)]

\item All configurations have no conjugate points;

\item All configurations are locally maximizing;

\item There exists a measurable positive function
$\nu:\Omega\rightarrow\mathbf{R}_+$ such that the cocycle
$$\nu_n(x,\Phi)=\nu(x,\Phi)\cdot...\cdot\nu(T^{n-1}(x,\Phi))$$
satisfies the discrete Jacobi equation (\ref{Jacobi}).

\item There exists measurable non-vertical, monotone sub-bundle defined a.e. on $\Omega$
which is invariant under the twist map $T$.

\end{enumerate}
\end{theorem}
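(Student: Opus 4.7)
I would prove the equivalences as (a)$\Leftrightarrow$(b), (c)$\Rightarrow$(a), (a)$\Rightarrow$(c), and (c)$\Leftrightarrow$(d). The only genuine construction is (a)$\Rightarrow$(c); the remaining directions reduce to discrete Morse theory and to a coordinate translation between Jacobi fields and invariant line fields on $\Omega$.

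For (a)$\Leftrightarrow$(b), one uses the discrete Morse index theorem. The Hessian of the action $\sum_{M}^{N-1}L(x_n,x_{n+1})$ with fixed endpoints $x_M,x_N$ is the tridiagonal symmetric form whose diagonal entries are $a_n$ and whose off-diagonal entries are $b_n$, and whose null space consists of Jacobi fields vanishing at $x_M$ and $x_N$. The discrete Morse index theorem then identifies the number of negative eigenvalues of this Hessian with the number of interior points conjugate to $x_M$, so absence of conjugate points is equivalent to definiteness of the Hessian on every finite segment; in the negative-twist convention ($L_{12}>0$) this definiteness is the statement of local maximization.

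For (c)$\Rightarrow$(a), the cocycle $\nu_n$ is itself a nowhere-vanishing Jacobi field, so the Sturm--Liouville substitution $\xi_n=\nu_n\rho_n$ applied to any second Jacobi field $\xi_n$ yields
\[
b_n\nu_n\nu_{n+1}(\rho_{n+1}-\rho_n)=b_{n-1}\nu_{n-1}\nu_n(\rho_n-\rho_{n-1});
\]
the common value $c$ is constant in $n$, and if $\xi_M=\xi_N=0$ with $M<N$ then telescoping $\rho_N-\rho_M=c\sum_{n=M}^{N-1}(b_n\nu_n\nu_{n+1})^{-1}$ with $b_n,\nu_n>0$ forces $c=0$, hence $\rho\equiv 0$ and $\xi\equiv 0$. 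The main step is the converse (a)$\Rightarrow$(c): along a fixed orbit $\{x_n\}$ let $\xi^{(N)}_n$ be the Jacobi field with $\xi^{(N)}_{-N}=0$, $\xi^{(N)}_{-N+1}=1$, which stays strictly positive for all $n>-N$ by the no-conjugate-points hypothesis. Normalising $\xi^{(N)}_0=1$, the positive ratios $\nu^{(N)}_n:=\xi^{(N)}_{n+1}/\xi^{(N)}_n$ satisfy the Riccati recursion extracted from \eqref{Jacobi}, and a Helly-type compactness argument gives a pointwise subsequential limit $\nu_n>0$ that still solves \eqref{Jacobi} on all of $\mathbf{Z}$. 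Setting $\nu(T^n(x_0,\Phi_0)):=\nu_{n+1}/\nu_n$ produces the required cocycle.

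Finally, (c)$\Leftrightarrow$(d) is a coordinate translation. A non-vertical line in $T_{(x,\Phi)}\Omega$ is determined by a slope $d\Phi/dx$, and linearising $\Phi_n=-L_1(x_n,x_{n+1})$ and $\Phi_{n+1}=L_2(x_n,x_{n+1})$ shows that $T$-invariance of such a line field along an orbit is equivalent to \eqref{Jacobi} being satisfied by the horizontal components $\xi_n$, with the cocycle factor $\nu_n=\xi_{n+1}/\xi_n$ positive precisely when the orientation $dx>0$ is preserved. The main obstacle I anticipate is the measurability and cocycle-compatibility of the function $\nu$ constructed in (a)$\Rightarrow$(c); I would handle it by taking the extremal choice $\nu_n:=\limsup_N \nu^{(N)}_n$, which is automatically Borel in $(x,\Phi)$ since each $\nu^{(N)}_n$ is a rational function of the measurable coefficients $a_n,b_n$.
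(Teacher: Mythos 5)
Your overall architecture is close to the paper's: the paper closes the cycle as (a)$\Rightarrow$(c)$\Rightarrow$(d)$\Rightarrow$(a) together with (a)$\Leftrightarrow$(b), citing \cite{B1} for the Hopf-type limit in (a)$\Rightarrow$(c) and writing the invariant line field explicitly for (c)$\Rightarrow$(d); your direct (c)$\Rightarrow$(a) by the reduction-of-order identity $b_n\nu_n\nu_{n+1}(\rho_{n+1}-\rho_n)=\mathrm{const}$ is a clean substitute for the paper's route through (d), and your Morse-index treatment of (a)$\Leftrightarrow$(b) replaces the paper's appeal to \cite{MMS} plus a continuity argument via Aubry--Mather configurations. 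However, there is one genuine gap. Condition (d) only provides the sub-bundle \emph{almost everywhere}, so your translation (d)$\Rightarrow$(c) produces a cocycle defined only along a full-measure set of orbits, and your (c)$\Rightarrow$(a) argument then yields absence of conjugate points only for almost every configuration --- whereas (a) demands it for \emph{all} configurations. The passage from a full-measure set of orbits to all orbits is not a formality: the paper devotes the second half of its (d)$\Rightarrow$(a) step to it, taking segments $\{x_0^{(k)},\dots,x_{N+1}^{(k)}\}$ in the support converging to an arbitrary segment, extracting a limit of the Jacobi fields $\{1=\xi_0,\dots,\xi_{N+1}=0\}$ (which are positive in the interior by the Sturm separation lemma), and showing the limiting non-negative Jacobi field is in fact positive, so the lemma applies to the limit orbit. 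Your proposal contains no counterpart to this closure argument, so the equivalence with (a) as stated is not established.

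Two further points need repair in (a)$\Rightarrow$(c). First, the claim that $\xi^{(N)}$ ``stays strictly positive for all $n>-N$ by the no-conjugate-points hypothesis'' is not immediate: a discrete Jacobi field can change sign between consecutive indices without vanishing at an integer (a ``generalized zero'' in the sense of \cite{E}), so ruling out two actual zeros does not by itself rule out a sign change; you need the discrete Sturm separation theorem here, as the paper does in its Lemma. Second, your fallback $\nu_n:=\limsup_N\nu_n^{(N)}$ does not obviously satisfy the Jacobi equation --- a limsup of solutions of a linear recursion taken index by index need not be a solution --- while a Helly-type subsequential limit chosen orbit by orbit jeopardizes both the cocycle property and measurability on $\Omega$. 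The actual content of the Hopf/\cite{B1} argument is that the Riccati ratios $\nu_n^{(N)}$ are \emph{monotone} in $N$ and bounded, so the limit exists without passing to subsequences; this simultaneously gives that the limit solves the equation, is a cocycle, and is measurable. You should either prove that monotonicity or cite it explicitly.
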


It is a well known fact in Aubry-Mather theory that any
configuration $\{x_n\}$ which comes from an orbit lying on a closed
continuous non-contractible invariant curve of $T$ is locally
maximizing. This fact implies that if one assumes the "foliation
condition", meaning that there exists an invariant curve passing
through every point of the cylinder, then each of the conditions
(a)-(d) of Theorem \ref{equivalent} follows. This remark explains
Corollary \ref{cor} of the main theorem.

Let me remark also that it is plausible that this "foliation
condition" is in fact equivalent to the conditions (a)-(d), like it
is established (\cite{J} see also \cite{A}) for the case of the
flows. However, I don't know the proof of this statement for the
case of twist maps.
\begin{proof}
Condition (b) is equivalent to (a) for the following reason. The
matrix of the second variation of any locally maximizing segment is
negative semi-definite. Then it follows from \cite {MMS} that in
such a case it is in fact negative definite. The condition of "no
conjugate points" along one configuration is equivalent to the fact
that for any segment the matrix of second variation is
non-degenerate. Therefore there are no conjugate points along every
locally maximizing orbit, so (b) implies (a). In the opposite
direction, if all configurations are known to have no conjugate
points then second variation matrix of any finite segment of any
orbit is non-degenerate and therefore must be positive definite by a
simple continuity argument, since among all configurations there are
Aubry-Mather configurations having negative definite second
variation.

Let me explain that (a) implies (c). This implication is proved in
\cite {B1} generalizing the limiting argument by Hopf. For any
configuration $\{x_n\}$ with no conjugate points one constructs
stable positive Jacobi field $\{\nu_n\},\ \nu_0=1$ along it. These
$\nu_n$ are measurable positive functions on $\Omega$ which form a
cocycle satisfying discrete Jacobi equation (\ref{Jacobi}).

The implication (c) $\Rightarrow$ (d) is obvious. Just define the
monotone sub-bundle consisting of oriented tangent lines $l(x,\Phi)$
in $T_{(x,\Phi)}\Omega$ spanned by the vectors
$$\frac{\partial}{\partial x}-(L_{11}(x,y)+L_{12}(x,y)\nu_1(x,\Phi)) \frac{\partial}{\partial
\Phi}.$$ It is measurable, monotone, and invariant under $T$. The
invariance follows easily from the discrete Jacobi equation
(\ref{Jacobi}). The last implication we need to prove is
$(d)\Rightarrow(a)$. For this we use the following:
\begin{lemma} Let $\{(x_n,\Phi_n), n=0,..,N\}$ be a finite segment of
an orbit of $T$, $(x_n,\Phi_n)=T^n(x_0,\Phi_0),n=0,..,N$. Let us
assume that $\{w_n, n=0,..,N\}$ is a field of tangent vectors to
$\Omega$ at the points $(x_n,\Phi_n)$ such that $w_n=DT^n(w_0)$ with
the "monotonicity" property, that is $dx(w_n)>0$ for all $0\leq
n\leq N$. Then any Jacobi field along the segment has at most one
"generalized" zero (or sign change, see \cite {E}), in particular
the points $x_i$ and $x_k$ are not conjugate for any $0\leq i<k\leq
N$.
\end{lemma}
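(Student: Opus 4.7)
The plan is to extract from the invariant monotone vector field $\{w_n\}$ a strictly positive Jacobi field along the orbit, and then to run a discrete Wronskian comparison against an arbitrary Jacobi field. The point where I expect to need care is handling "generalized zeros" (sign changes across consecutive indices), but this will turn out to be absorbed into the same monotonicity argument.

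First I would set $\eta_n := dx(w_n)$; by the monotonicity hypothesis, $\eta_n>0$ for $n=0,\ldots,N$. Since $w_n=DT^n(w_0)$ is the linearization of $T$ along the orbit, its $x$-component automatically satisfies the discrete Jacobi equation (\ref{Jacobi}). Indeed, writing $w_n=\eta_n\,\partial_x+B_n\,\partial_\Phi$ and differentiating the twist relations $\Phi_n=-L_1(x_n,x_{n+1})$ and $\Phi_{n+1}=L_2(x_n,x_{n+1})$, one obtains two expressions for $B_{n+1}$, one from the $(n,n+1)$-step and one from the $(n+1,n+2)$-step; equating them gives precisely (\ref{Jacobi}) for $\eta$. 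Thus $\{\eta_n\}$ is a strictly positive Jacobi field.

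Next, for an arbitrary Jacobi field $\{\xi_n\}$ along the segment I would introduce the discrete Wronskian
\begin{equation*}
W_n := b_n\bigl(\xi_{n+1}\eta_n - \xi_n\eta_{n+1}\bigr).
\end{equation*}
Multiplying (\ref{Jacobi}) for $\xi$ by $\eta_n$, the analogous equation for $\eta$ by $\xi_n$, and subtracting, a routine rearrangement gives $W_n=W_{n-1}$, so $W_n\equiv W$ is constant. Dividing the identity $\xi_{n+1}\eta_n-\xi_n\eta_{n+1}=W/b_n$ by $\eta_n\eta_{n+1}>0$, the ratio $\rho_n:=\xi_n/\eta_n$ satisfies
\begin{equation*}
\rho_{n+1}-\rho_n = \frac{W}{b_n\,\eta_n\,\eta_{n+1}}.
\end{equation*}
By the negative twist assumption $b_n=L_{12}>0$, all of these differences share the sign of $W$, so $\{\rho_n\}$ is a monotone sequence.

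Finally, if $W\neq 0$ the monotonicity is strict, so $\rho_n$ vanishes at most once and changes sign at most once; since $\eta_n>0$, the Jacobi field $\xi_n=\rho_n\eta_n$ has at most one generalized zero in the sense of \cite{E}. If $W=0$ then $\rho_n$ is constant, so $\xi$ is a scalar multiple of the positive field $\eta$, hence either identically zero (excluded as a non-trivial Jacobi field) or nowhere zero. In either case $\xi$ cannot vanish at two distinct indices of $\{0,\ldots,N\}$, which shows that $x_i$ and $x_k$ are not conjugate for $i<k$. The generalized-zero clause is dealt with by the same monotonicity: a sign change of $\xi$ across a gap $(n,n+1)$ forces a sign change of the monotone sequence $\rho$ across the same gap, and a monotone sequence can do this at most once.
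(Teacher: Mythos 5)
Your proposal is correct and follows essentially the same route as the paper: project the invariant monotone field via $dx$ to obtain a strictly positive Jacobi field, then conclude that no other Jacobi field can change sign twice. The only difference is that where the paper cites the discrete Sturm separation theorem (Elaydi, Theorem 7.9) for the second step, you prove it directly with the constant discrete Wronskian and the monotonicity of the ratio $\rho_n=\xi_n/\eta_n$, which is a correct, self-contained substitute.
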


\begin{proof}[Proof of Lemma]
Given  $w_n$ like in lemma. Projection of the invariant field $w_n$
defines a \textit{positive} Jacobi field $\{\xi_0,..,\xi_n\}$ along
the configuration $\{x_0,..,x_n\}$. Then it follows from discrete
version of Sturm Separation theorem (\cite{E}, theorem 7.9) that no
Jacobi field can change sign along the segment and in particular no
conjugate points can occur.
\end {proof}

It follows from the lemma that all orbits lying in the support of
the monotone sub-bundle do not have conjugate points. The support is
a set of full measure. Then I claim that all orbits in the closure
of the support (i.e. all the orbits) have no conjugate points as
well. Inded, take any finite segment $\{x_0,.., x_{N+1}\}$ of an
orbit lying in the support of the monotone sub-bundle. Then one can
find a unique Jacobi field $\{1=\xi_0,.., \xi_{N+1}=0\}$. It is
possible since $x_0,x_{N+1}$ are not conjugate. Then it follows from
the lemma $\xi_i, i=1,..,N$ must be positive.

Moreover having a sequence of such segments $\{x^{(k)}_0,..,
x^{(k)}_{N+1}\}$ which converges to a segment $\{y_0,.., y_{N+1}\}$
when $k\rightarrow\infty$ one can pass to the limit and thus to get
a non-negative Jacobi field $\{1=\zeta_0,..,\zeta_N,\zeta_{N+1}=0\}$
along the limiting segment $\{y_0,.., y_{N+1}\}$ with the property
$\zeta_i\geq 0$ . Then it is clear that $\zeta_i$ are in fact
positive for all $i=0,..,N$. So by the lemma the segment $\{y_0,..,
y_{N}\}$ has no conjugate points. Thus all orbits have no conjugate
points and so (a) follows. This completes the proof of the theorem.
\end{proof}

\begin{remark}
It is impossible to drop the monotonicity condition in the
assumption (d) of the last theorem. For example, one can take the
phase portrait of the elliptical billiard in the plane and remove
all the iterates of the zero section of the phase cylinder in order
to get invariant non-vertical sub-bundle defined on a set of full
measure. This sub-bundle is not monotone, and of course there are
conjugate points for elliptical billiard.
\end{remark}

\section{Mirror equation}
We state and prove in this section the mirror equation on a
measurable function $a: \Omega\rightarrow\mathbf{R}$. It follows
from the mirror formula and no conjugate points condition. It could
be proved for general case of billiards in simple Riemannian
domains. But in order to simplify the notations we shall do it only
for constant curvature case. General case is absolutely analogous
with special care taken on the initial points of the Jacobi fields
involved.

\begin{theorem}
If the billiard has no conjugate points, then there exists a
measurable  function on the phase cylinder $a:
\Omega\rightarrow\mathbf{R}$ such that $0<a(x,\Phi)<L(x,\Phi)$ which
satisfies the mirror equation:
\begin{equation} \frac{Y^{'}}{Y}(a(x,\Phi))+\frac{Y^{'}}{Y}(
L(x_{-1},\Phi_{-1})-a(x_{-1},\Phi_{-1}))=\frac{2k(x)}{\sin\phi}
\label{mirror}
\end{equation}

\end{theorem}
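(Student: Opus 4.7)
The plan is to read off $a(x,\Phi)$ directly from the positive Jacobi cocycle supplied by Theorem~\ref{equivalent}, and then to verify the mirror equation~\eqref{mirror} by a short algebraic identity. From the no-conjugate-points hypothesis and Theorem~\ref{equivalent}(c) I take the measurable positive cocycle $\nu:\Omega\to\mathbf{R}_+$ whose iterates $\nu_n=\nu(x,\Phi)\cdots\nu(T^{n-1}(x,\Phi))$ form a Jacobi field along every orbit.

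Next I would write the discrete Jacobi equation~\eqref{Jacobi} at index $n=0$ with $\xi_0=1$, $\xi_1=\nu(x,\Phi)$, and $\xi_{-1}=1/\nu(T^{-1}(x,\Phi))$, substitute the derivative formulas~\eqref{derivatives} for $L_{11},L_{22},L_{12}$, and divide through by $\sin\phi$. Grouping the terms depending on $x_1=T(x,\Phi)$ into a ``forward'' bracket and those depending on $x_{-1}=T^{-1}(x,\Phi)$ into a ``backward'' bracket yields the identity
\[
\left[\frac{Y'(L)}{Y(L)}+\frac{\sin\phi_1\,\nu(x,\Phi)}{\sin\phi\,Y(L)}\right]
+\left[\frac{Y'(L_{-1})}{Y(L_{-1})}+\frac{\sin\phi_{-1}}{\sin\phi\,Y(L_{-1})\,\nu(T^{-1}(x,\Phi))}\right]
=\frac{2k(x)}{\sin\phi},
\]
where $L=L(x,\Phi)$, $L_{-1}=L(T^{-1}(x,\Phi))$, and $\phi,\phi_{\pm1}$ are the reflection angles at $x$ and $T^{\pm1}(x,\Phi)$.

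I would then define $a(x,\Phi)$ implicitly by declaring $Y'(a)/Y(a)$ equal to the forward bracket. Because $Y'/Y$ is $\coth$ in the hyperbolic case and $\cot$ in the spherical case --- in particular strictly decreasing from $+\infty$ at $0$ --- and because the forward bracket strictly exceeds $Y'(L)/Y(L)$, this equation has a unique solution $a\in(0,L)$, measurable in $(x,\Phi)$. With this definition the displayed identity is exactly the mirror equation~\eqref{mirror}, provided the backward bracket can also be written as $(Y'/Y)(L_{-1}-a(T^{-1}(x,\Phi)))$.

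That last step is the only real work. Writing $F=Y'(L_{-1})/Y(L_{-1})$, $Y=Y(L_{-1})$, and $r=\sin\phi\,\nu(T^{-1}(x,\Phi))/\sin\phi_{-1}$, the definition of $a$ applied at $T^{-1}(x,\Phi)$ reads $Y'(a)/Y(a)=F+r/Y$, and the backward bracket in the displayed identity equals $F+1/(rY)$. So what I must verify is
\[
\frac{Y'(L_{-1}-a)}{Y(L_{-1}-a)}=F+\frac{1}{rY},
\]
which follows in one line from the subtraction formulas for $\sinh,\cosh$ (resp.\ $\sin,\cos$) combined with the Pythagorean identity $F^{2}\mp 1=1/Y^{2}$ (minus sign in the hyperbolic case, plus in the spherical). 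I expect this short algebraic identity to be the only step requiring a careful sign check, and the same computation handles both signs of curvature uniformly.
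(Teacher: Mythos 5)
Your proposal is correct and follows essentially the same route as the paper: both extract $a$ from the positive Jacobi cocycle of Theorem \ref{equivalent}(c) and obtain the mirror equation by rewriting the discrete Jacobi equation via the second-derivative formulas (\ref{derivatives}) and the subtraction (Wronskian) identities for $Y$. The only cosmetic difference is that you define $a$ by setting $Y'(a)/Y(a)$ equal to the forward bracket, while the paper uses the equivalent focusing condition $Y(a)/Y(L-a)=\sin\phi_0/(\nu_1\sin\phi_1)$; the two definitions coincide by the same one-line identity you invoke at the end (and your ``divide by $\sin\phi$'' should read $\sin^2\phi$, a harmless slip since your displayed identity is the correct one).
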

For the Euclidean case this theorem was proved in \cite{W} under the
assumption (d) of the existence of invariant monotone sub-bundle of
the billiard map. I shall show how to construct geometrically this
function assuming the condition (a) of no conjugate points. The
geometric meaning of this function is the distance traveled along
the geodesic to the point lying on the caustic. This is defined as
follows.

Let $\{x_n\}$ be a billiard configuration with no conjugate points.
Take the geodesic segment $g_{x_0,x_1}$. Since this geodesic has no
conjugate points there exists and unique orthogonal Jacobi field $J$
along this geodesic which has the following boundary values at the
ends of the segment
$$J(x_0 )=\nu_0 \sin\phi_0 ; \ J(x_1)=-\nu_1(x_0,\Phi_0)
\sin\phi_1.$$ Here $\nu_0=1$ and $\nu_1$ is positive function of the
cocycle $\nu_n$ described in condition (c) of Theorem 3.1, that is
$\nu_0=1$ and $\nu_1$ are the values of the positive Jacobi field at
the points $x_0,x_1$ respectively. Since the signs of $J$ at the
ends of the segment are opposite then there exists a unique point
$z$ between $x_0$ and $x_1$ where $J$ vanishes. So we define
$a(x_0,\Phi_0)$ to be the distance along the segment from $x_0$ to
$z$. One can easily see that $a(x_0,\Phi_0)$ must be the unique
solution of the equation

\begin{equation}
\frac{Y(a)}{Y(L(x_0,\Phi_0)-a)}= \frac{\sin
\phi_0}{\nu_1(x_0,\Phi_0)\sin \phi_1} \label{a0}
\end{equation}

It can be easily seen that the function on the left hand side
defined on $(0,L)$ is strictly monotone increasing (one can compute
the derivative using Wronskian) growing from $0$ at $0$ to $+\infty$
at $L$. Therefore there is a unique solution depending continuously
both on $L$ and on the right hand side. Since the cocycle is
measurable then the function $a$ is measurable also. Analogously to
$a(x_0,\Phi_0)$ we have

\begin{equation}
 \label{a-}
\frac{Y(a(x_{-1},\Phi_{-1}))}{Y(L(x_{-1},\Phi_{-1})-a(x_{-1},\Phi_{-1}))}=
\frac{\sin \phi_{-1}}{\nu_1(x_{-1},\Phi_{-1})\sin\phi_0}=
\end{equation}
$$=\frac{\sin \phi_{-1} \nu_{-1}(x_{0},\Phi_{0})}{\sin\phi_0},$$
where the last equality in (\ref{a-}) follows from the cocycle
property.

The last thing is to show how the mirror equation (\ref{mirror})
follows. It turns out that it is just another form of the Jacobi
equation (\ref{Jacobi1}):
\begin{equation}
 L_{12}(x_{-1},x_0)\nu_{-1}+(L_{22}(x_{-1},x_0)+L_{11}(x_0,x_{1}))
+L_{12}(x_0,x_{1})\nu_{1}=0.\label{Jacobi1}
\end{equation}
Indeed, substituting the explicit formulas for the second
derivatives (\ref{derivatives}) of $L$ and dividing by $\sin^2
\phi_0$ one can rewrite (\ref{Jacobi1}) in the form

\begin{eqnarray}
\label{Jacobi2}
 \lefteqn{\frac{\nu_{-1}(x_{0},\Phi_{0})\sin\phi_{-1}}{Y(L(x_{-1},\Phi_{-1}))\sin
\phi_{0}}+\frac{Y^{'}}{Y}(L(x_{-1},\Phi_{-1}))+}\nonumber\\
   && +\frac{Y^{'}}{Y}(L(x_{0},\Phi_{0}))+
\frac{\nu_{1}(x_{0},\Phi_{0})\sin\phi_{1}}{Y(L(x_{0},\Phi_{0}))\sin
\phi_{0}}=\frac{2k(x_0)}{\sin\phi_0}.
\end{eqnarray}

We can rewrite the first and the last expression in (\ref{Jacobi2})
by (\ref{a-}) and (\ref{a0}) respectively to get

\begin{eqnarray}
\label{Jacobi3}
 \lefteqn{\frac{Y(a(x_{-1},\Phi_{-1}))}{Y(L(x_{-1},\Phi_{-1})-a(x_{-1},\Phi_{-1}))Y(L(x_{-1},\Phi_{-1}))}
 +\frac{Y^{'}}{Y}(L(x_{-1},\Phi_{-1}))}\nonumber\\
   && +\frac{Y^{'}}{Y}(L(x_{0},\Phi_{0}))+
\frac{Y(L(x_{0},\Phi_{0})-a(x_{0},\Phi_{0}))}{Y(L(x_{0},\Phi_{0}))Y(a(x_{0},\Phi_{0}))}=\frac{2k(x_0)}{\sin\phi_0}.
\end{eqnarray}

Now using the fact that Wronskian of any two Jacobi fields is
constant we can write the identities:

$$Y(a)=Y(L)Y^{'}(L-a)-Y^{'}(L)Y(L-a)$$ and
$$Y(L-a)=Y(L)Y^{'}(a)-Y(a)Y^{'}(L),
$$
(in the constant curvature case these are just usual trigonometric
identities). Use these identities on the left hand side of
(\ref{Jacobi3}), the first identity-for the nominator of the first
summand and the second identity-for the nominator of the last
summand. Then it can be easily verified that this yields exactly the
mirror equation (\ref{mirror}) claimed in the theorem.

\section{Proof of rigidity for Hyperbolic plane}
In this section we prove the main theorem for billiards on the
hyperbolic plane.

Write the mirror equation for the Hyperbolic plane (\ref{mirror}):
\begin{equation}
\coth \left(a(x,\Phi)\right)+\coth\left
(L(x_{-1},\Phi_{-1})-a(x_{-1},\Phi_{-1})\right)=\frac{2k(x)}{\sin{\phi}}\label{mhyp}
\end{equation}
Recall that for $t>0$, $\coth t$ is a convex function on $t$  with
$\coth t>1$. Since the equation (\ref{mhyp}) holds true for every
$(x,\Phi)$ we can take $\phi=\pi/2$ and thus immediately get that
$$k(x)>1$$ for any $x$. In other words it follows that our domain must be convex
with respect to horocycles on the Hyperbolic plane. Moreover using
convexity one has
$$
\coth
\frac{a(x,\Phi)+L(x_{-1},\Phi_{-1})-a(x_{-1},\Phi_{-1})}{2}\leq\frac
{k(x)}{\sin{\phi}}.
$$
This can be written in the equivalent form

$$\frac{a(x,\Phi)+L(x_{-1},\Phi_{-1})-a(x_{-1},\Phi_{-1})}{2}\geq
\arctanh\left(\frac{\sin\phi}{k(x)}\right)
$$

Integrate the last inequality with respect to the invariant measure
$d \mu=dx\ d\Phi=\sin\phi\ dx\ d\phi$ to get $$ \int L\ d\mu\geq\
2\int_0^P dx\int_0^{\pi}
\arctanh\left(\frac{\sin\phi}{k(x)}\right)\sin\phi\ d\phi=$$
$$=4\int_0^P dx\int_0^{\pi/2}
\arctanh\left(\frac{\sin\phi}{k(x)}\right)\sin\phi\ d\phi.
$$
Here $P$ is the perimeter of the boundary curve $\gamma$.
 For every $x$ compute the inner integral on the right hand side
 integrating by
parts
$$
\int_0^{\pi/2} \arctanh \left(\frac{\sin\phi}{k(x)}\right)\sin\phi\
d\phi=k(x)\int_0^{\pi/2}\frac{\cos^2\phi}{k^2(x)-\sin^2\phi}\ d\phi=
$$
$$
=\frac{\pi}{2}(k(x)-\sqrt{k^2(x)-1}).
$$
Using Santalo' formula $\int L\ d\mu=2\pi A$ ($A$ is the area of the
domain) we obtain the following inequality
$$
A\geq\int_0^P( k(x)-\sqrt{k^2(x)-1})\ dx.
$$
Use Gauss-Bonnet to write it in the form
$$
A\geq 2\pi+A-\int_0^P\sqrt{k^2(x)-1}\ dx
$$
and therefore
$$
\int_0^P\sqrt{k^2(x)-1}\ dx \geq 2\pi
$$
But then it follows from the next lemma stating the opposite
inequality that the curve $\gamma$ must be a circle.
\begin{lemma}For any simple closed curve $\gamma$ on the Hyperbolic
plane which is convex with respect to horocycles the following
inequality holds true
$$
\int_0^P\sqrt{k^2(x)-1}\ dx \leq 2\pi,
$$
where the equality is possible only for circles.
\end{lemma}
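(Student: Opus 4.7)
The plan is to prove the inequality by combining Cauchy--Schwarz with two classical facts on the hyperbolic plane: the Gauss--Bonnet theorem and the isoperimetric inequality. The horocycle-convexity assumption $k \geq 1$ is precisely what makes the factorization $\sqrt{k^2-1}=\sqrt{k-1}\cdot\sqrt{k+1}$ a product of real nonnegative quantities, and this is the only structural input we will need.

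First I would apply the Cauchy--Schwarz inequality in $L^2(0,P)$ to this factorization:
$$
\int_0^P \sqrt{k^2-1}\,dx \;\leq\; \sqrt{\int_0^P (k-1)\,dx}\cdot\sqrt{\int_0^P (k+1)\,dx}\;=\;\sqrt{\Bigl(\int_0^P k\,dx\Bigr)^{\!2}-P^2}.
$$
Next I would invoke Gauss--Bonnet: since $\gamma$ bounds a domain $D$ of hyperbolic area $A$ on a surface of Gauss curvature $-1$, one has $\int_0^P k\,dx = 2\pi+A$. Substitution yields
$$
\int_0^P \sqrt{k^2-1}\,dx \;\leq\; \sqrt{(2\pi+A)^2-P^2}.
$$

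The final step is the hyperbolic isoperimetric inequality $P^2 \geq A^2+4\pi A$, which rearranges algebraically to $(2\pi+A)^2-P^2 \leq 4\pi^2$. Combining gives the claimed bound $\int_0^P\sqrt{k^2-1}\,dx \leq 2\pi$. For the equality analysis, Cauchy--Schwarz equality forces $\sqrt{k-1}$ and $\sqrt{k+1}$ to be proportional, hence $k$ is constant, which on the hyperbolic plane means $\gamma$ is a geodesic circle (and equality in the isoperimetric inequality confirms the same).

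I do not foresee a serious obstacle in carrying this out. The one substantive move is spotting the factorization $\sqrt{k^2-1}=\sqrt{k-1}\sqrt{k+1}$; once it is in place, the resulting bound $\sqrt{(2\pi+A)^2-P^2}$ matches the hyperbolic isoperimetric inequality exactly. A minor technicality worth checking is that $2\pi+A\geq P$ so that the inner square root is real, but this is immediate from $k\geq 1$ and Gauss--Bonnet.
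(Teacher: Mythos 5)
Your proof is correct and is essentially identical to the paper's: the same Cauchy--Schwarz factorization $\sqrt{k^2-1}=\sqrt{k-1}\,\sqrt{k+1}$, followed by Gauss--Bonnet ($\int k\,dx = 2\pi+A$) and the hyperbolic isoperimetric inequality $P^2\geq A^2+4\pi A$. Your equality discussion (constancy of $k$ from the Cauchy--Schwarz equality case) is a welcome detail the paper leaves implicit.
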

\begin{proof}

The integral can be estimated from above by the Cauchy-Schwartz
$$
\int_0^P\sqrt{k^2(x)-1}\ dx\leq \left(\int_0^P (k(x)-1)\ dx
\right)^{\frac{1}{2}}\left(\int_0^P (k(x)+1)\ dx
\right)^{\frac{1}{2}}=
$$
$$
=((A+2\pi-P)(A+2\pi+P))^{\frac{1}{2}},
$$
where we have applied Gauss Bonnet formula. The last expression
gives
$$((A+2\pi-P)(A+2\pi+P))^{\frac{1}{2}}=(A^2+4\pi A-P^2+4\pi^2)^{\frac{1}{2}}\leq2\pi,$$
since by the isoperimetric inequality on the Hyperbolic plane
$$A^2+4\pi A-P^2\leq 0.$$
\end{proof}
This completes the proof for the Hyperbolic plane.


\section{Proof of rigidity for Hemisphere}
In this section we treat the case of the hemisphere.

Write the mirror equation (\ref{mirror}) for the Hemisphere:
\begin{equation}
\cot \left(a(x,\Phi)\right)+\cot\left
(L(x_{-1},\Phi_{-1})-a(x_{-1},\Phi_{-1})\right)=\frac{2k(x)}{\sin{\phi}}\label{mh}
\end{equation}
We shall need the following easy lemma.
\begin{lemma}
\label{cot}
 The following inequality holds true
$$\cot\left(\frac{a+b}{2}\right)\leq\frac{\cot a+\cot b}{2},$$
for all $a,b$ in the range $(0;\pi)$ satisfying
$\frac{a+b}{2}\leq\pi/2.$
\end{lemma}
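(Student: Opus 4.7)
The inequality does not follow from convexity of $\cot$ alone: $\cot$ is convex on $(0,\pi/2)$ but concave on $(\pi/2,\pi)$, and the hypothesis allows one of $a,b$ to lie above $\pi/2$ as long as the other compensates. So my plan is to reduce everything to a single trigonometric identity in the midpoint variable $s=(a+b)/2$ and the half-gap $d=(a-b)/2$, and exploit the hypothesis $s\leq\pi/2$ only at the end to clear signs.

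First I would rewrite the right-hand side using the standard identity
\[
\cot a+\cot b=\frac{\sin(a+b)}{\sin a\sin b},
\]
combined with the product-to-sum formula $\sin a\sin b=\tfrac12(\cos(a-b)-\cos(a+b))$. With $a+b=2s$ and $a-b=2d$ this yields
\[
\frac{\cot a+\cot b}{2}=\frac{\sin s\cos s}{\sin^{2}s-\sin^{2}d}.
\]
The claim then reduces to
\[
\frac{\cos s}{\sin s}\leq\frac{\sin s\cos s}{\sin^{2}s-\sin^{2}d}.
\]

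Next I would check that all the signs permit clearing denominators. Since $s\in(0,\pi/2]$ one has $\sin s>0$ and $\cos s\geq 0$; since $a,b>0$ one has $|d|<s\leq\pi/2$, so by monotonicity of $\sin$ on $[0,\pi/2]$ we get $\sin^{2}s-\sin^{2}d>0$. In the generic case $\cos s>0$ I cancel $\cos s$ and multiply both sides by $\sin s(\sin^{2}s-\sin^{2}d)>0$, obtaining the equivalent inequality
\[
\sin^{2}s-\sin^{2}d\leq\sin^{2}s,
\]
which is trivially $\sin^{2}d\geq 0$, with equality iff $d=0$, i.e.\ $a=b$. The edge case $s=\pi/2$ is handled by direct substitution: $\cot(\pi/2)=0$ and $b=\pi-a$ gives $\cot a+\cot b=0$, so equality holds.

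There is no real obstacle; the only point worth flagging is that the hypothesis $s\leq\pi/2$ is used crucially to guarantee $\cos s\geq 0$, which is what makes the final reduction $\sin^{2}d\geq 0$ go in the right direction. Without this hypothesis the inequality would reverse in the regime $s>\pi/2$, consistent with the concavity of $\cot$ there.
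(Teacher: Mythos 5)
Your proof is correct, and it takes a genuinely different route from the paper's. The paper splits into two cases: when both $a,b\in(0,\pi/2]$ it simply invokes convexity of $\cot$ on that interval, and in the mixed case it substitutes $a=\pi/2-x-\delta$, $b=\pi/2+x$ and reduces the claim to the tangent inequality $\tan\frac{\delta}{2}\leq\frac{\tan(x+\delta)-\tan x}{2}$, which it verifies via the addition formula and $\tan\delta\geq 2\tan\frac{\delta}{2}$. You instead pass to the midpoint/half-gap variables $s,d$ and use the identity $\frac{\cot a+\cot b}{2}=\frac{\sin s\cos s}{\sin^{2}s-\sin^{2}d}$ to collapse the whole statement to $\sin^{2}d\geq 0$ after a sign check ($\cos s\geq 0$, $\sin^{2}s>\sin^{2}d$ from $|d|<s\leq\pi/2$), handling $s=\pi/2$ separately. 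Your computation checks out at every step: the product-to-sum reduction, the positivity of the denominator, and the direction of the inequality all hold. What your approach buys is a uniform argument with no case split on where $a$ and $b$ sit relative to $\pi/2$, plus an explicit identification of the equality cases ($a=b$ or $a+b=\pi$); what the paper's approach buys is that the ``easy'' case is dispatched in one line by convexity, at the cost of a slightly fiddly second case. Either proof serves the application in Section 6 equally well.
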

\begin{proof}This is obvious in the case when both $a$ and $b$
belong  to $(0;\pi/2]$ just by the convexity of $\cot$ on this
interval. In the remaining case one of the numbers, say $a$ lies in
$(0;\pi/2)$ and $b$ in $(\pi/2;\pi)$. Since the average is
$\leq\pi/2$ we can write $a=\pi/2-x-\delta$ and $b=\pi/2+x$, where
$x,\delta $ are non-negative and $x+\delta<\pi/2$. With these
notations one needs to prove that in this range,
$$\tan \frac{\delta}{2}\leq\frac{\tan(x+\delta)-\tan x}{2}.$$
Indeed we have by the trigonometric formula

$$\frac{\tan(x+\delta)-\tan x}{2}=\frac{(\tan\delta)(1+\tan x
\tan(x+\delta))}{2}\geq\frac{\tan\delta}{2}\geq\tan\frac{\delta}{2}.$$

This proves the lemma.
\end{proof}
Now take the following two numbers
$$a=a(x,\Phi),\ b=L(x_{-1},\Phi_{-1})-a(x_{-1},\Phi_{-1}),
$$
and notice that both of them lie in $(0;\pi)$ since the billiard
domain lies entirely on the hemisphere. Also their average $(a+b)/2$
must be $\leq\pi/2$ since otherwise the sum $\ \cot a+\cot b\ $
would be negative which contradicts the equation (\ref{mh}). Thus
Lemma \ref{cot} can be applied to get:
$$
\cot
\frac{a(x,\Phi)+L(x_{-1},\Phi_{-1})-a(x_{-1},\Phi_{-1})}{2}\leq\frac
{k(x)}{\sin\phi}.
$$
This can be written in the equivalent form:

$$\frac{a(x,\Phi)+L(x_{-1},\Phi_{-1})-a(x_{-1},\Phi_{-1})}{2}\geq
\arctan\left(\frac{\sin\phi}{k(x)}\right).
$$

Integrate the last inequality with respect to the invariant measure
$d \mu=dx\ d\Phi=\sin\phi\ dxd\phi$ to get

$$\int L\ d\mu\ \geq\ 2\int_0^P dx\int_0^{\pi}
\arctan\left(\frac{\sin\phi}{k(x)}\right)\sin\phi\ d\phi=$$
$$=4\int_0^P dx\int_0^{\pi/2}
\arctan\left(\frac{\sin\phi}{k(x)}\right)\sin\phi\ d\phi.
$$
Compute the inner integral on the right hand side
 integrating by
parts
$$
\int_0^{\pi/2} \arctan\left(\frac{\sin\phi}{k(x)}\right)\sin\phi\
d\phi=k(x)\int_0^{\pi/2}\frac{\cos^2\phi}{k^2(x)+\sin^2\phi}\ d\phi=
$$
$$
=\frac{\pi}{2}(\sqrt{k^2(x)+1}-k(x)).
$$
Using Santalo' formula $\int L\ d\mu=2\pi A$ again we obtain the
following inequality
$$
A\geq\int_0^P(\sqrt{k^2(x)+1}-k(x))\ dx.
$$
Use Gauss-Bonnet to write it in the form
$$
A\geq \int_0^P\sqrt{k^2(x)+1}\ dx-(2\pi-A),
$$
which leads to
$$
\int_0^P\sqrt{k^2(x)+1}\ dx\leq 2\pi.
$$
But then the following lemma implies that the curve $\gamma$ must be
a circle thus completing the proof of the main theorem for the
Hemisphere.
\begin{lemma}
For any simple closed curve on the hemisphere the following
inequality holds
$$
\int_0^P\sqrt{k^2(x)+1}\ dx\geq 2\pi,
$$
where the equality happens only for circles.
\end{lemma}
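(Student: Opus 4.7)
My approach is to mirror the structure of the hyperbolic argument from the previous section: combine a sharp convexity/triangle inequality with the spherical isoperimetric inequality. The convex counterpart on the sphere is Minkowski's integral (triangle) inequality applied in $\mathbb{R}^2$.

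First I would view $\sqrt{k^2(x)+1}$ as the Euclidean length of the vector $(k(x),1)\in\mathbb{R}^2$. The triangle inequality for vector-valued integrals then gives
$$\int_0^P\sqrt{k^2(x)+1}\,dx \;\geq\; \left|\int_0^P\bigl(k(x),\,1\bigr)\,dx\right| \;=\; \sqrt{\left(\int_0^P k(x)\,dx\right)^{2}+P^{2}}.$$
By Gauss--Bonnet on the unit hemisphere, $\int_0^P k(x)\,dx=2\pi-A$, where $A$ is the area enclosed by $\gamma$. So the right hand side equals $\sqrt{(2\pi-A)^2+P^2}$. (Equivalently, one can get the same estimate by Jensen's inequality applied to the convex function $t\mapsto\sqrt{t^2+1}$.)

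Next I would invoke the spherical isoperimetric inequality $P^2\geq A(4\pi-A)$, valid for any simple closed curve on the sphere; since $\gamma$ lies on the hemisphere one automatically has $A\leq 2\pi$. Substituting this into the previous estimate gives
$$(2\pi-A)^2+P^2 \;\geq\; (2\pi-A)^2+A(4\pi-A) \;=\; 4\pi^2,$$
which yields $\int_0^P\sqrt{k^2(x)+1}\,dx\geq 2\pi$, as required.

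Finally, the equality case: equality in the triangle inequality forces the vectors $(k(x),1)$ to be everywhere parallel to a single fixed direction, hence $k(x)$ is constant along $\gamma$; equality in the spherical isoperimetric inequality forces the enclosed region to be a geodesic disk. Either condition identifies $\gamma$ as a geodesic circle, and the other is then automatic. I do not anticipate a real obstacle here, since Gauss--Bonnet and the spherical isoperimetric inequality are classical and the Minkowski reduction is essentially one line; the only point requiring care is matching the two equality cases to extract the ``only for circles'' conclusion. It is worth noting that this intrinsic derivation is strictly parallel to the hyperbolic one in Section~5 (Cauchy--Schwarz/Minkowski plus the model's isoperimetric inequality), which was presumably the motivation for presenting both proofs uniformly rather than relying on the Fenchel-type embedding argument alluded to in the introduction.
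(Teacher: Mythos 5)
Your proof is correct and takes essentially the same route as the paper's: both arguments reduce the claim to the intermediate inequality $I^2\geq P^2+(2\pi-A)^2$ (with $\int_0^P k\,dx=2\pi-A$ from Gauss--Bonnet) and then conclude via the spherical isoperimetric inequality $P^2+A^2-4\pi A\geq 0$; the only difference is that you derive the intermediate step from the Minkowski (vector triangle) inequality applied to $(k(x),1)$, whereas the paper applies Cauchy--Schwarz to the factorization $\bigl(\sqrt{k^2+1}-1\bigr)\bigl(\sqrt{k^2+1}+1\bigr)=k^2$, which is equivalent. Your explicit discussion of the equality case is a small improvement, as the paper states it without detail.
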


\begin{remark}
Notice that if the sphere is realized as a unite sphere in the
Euclidean 3-space, then lemma follows from Fenchel inequality. This
is because $\sqrt{k^2(x)+1}$ equals to absolute curvature of the
curve in Euclidean space. Next we give an intrinsic proof.
\end{remark}
\begin{proof}
Denote by $I=\int_0^P\sqrt{k^2(x)+1}\ dx$. We have by Cauchy-
Schwartz and Gauss-Bonnet
$$
\int_0^P(\sqrt{k^2(x)+1}+1)\ dx\  \cdot \int_0^P(\sqrt{k^2(x)+1}-1)\
dx\geq\left(\int_0^P k(x)\ dx\right )^2=(2\pi-A)^2.
$$
This can be rewritten as
$$
(I-P)(I+P)\geq (2\pi-A)^2,
$$
and hence
$$I^2\geq P^2+A^2-4\pi A+4\pi ^2\geq 4\pi^2.
$$
Here in the last inequality we used the isoperimetric inequality on
the sphere:
$$
 P^2+A^2-4\pi A \geq0.
$$
Thus $I\geq2\pi$. The proof is completed.
\end{proof}

\end{document}